\newcounter{dummy}
\numberwithin{dummy}{section}
\newtheorem{thm}[dummy]{Theorem}
\newtheorem{lem}[dummy]{Lemma}
\newtheorem{prop}[dummy]{Proposition}
\newtheorem{cor}[dummy]{Corollary}
\DeclareMathOperator{\id}{id}
\DeclareMathOperator{\GL}{GL}
\renewcommand{\@biblabel}[1]{[#1]\hfill}
\begin{document}
\title{Generalized Vojta-R\'emond inequality}

\author{Gabriel A. Dill}
\address{Departement Mathematik und Informatik, Universit\"{a}t Basel, Spiegelgasse~1, CH-4051 Basel}
\email{gabriel.dill@unibas.ch}
\date{\today}

\begin{abstract}
Following and generalizing unpublished work of Ange, we prove a generalized version of R\'{e}mond's generalized Vojta inequality. This generalization can be applied to arbitrary products of irreducible positive-dimensional projective varieties, defined over the field of algebraic numbers, instead of powers of one fixed such variety. The proof runs closely along the lines of R\'{e}mond's proof.
\end{abstract}
\subjclass[2010]{11G35, 11G50, 14G25, 14G40.}

\keywords{Heights, diophantine approximation, arithmetic geometry.}
\maketitle
\section{Introduction}
Let $m \geq 2$ be an integer and let $X_1, \hdots, X_m$ be a family of irreducible positive-dimensional projective varieties, defined over $\bar{\mathbb{Q}}$. We wish to extend R\'{e}mond's results of \cite{MR2233693} to the case of an algebraic point $x = (x_1,\hdots,x_m)$ in the product $X_1 \times \hdots \times X_m$. The following article is a further generalization of a generalization of these results by Thomas Ange. It draws heavily on a written account of this generalization by Ange \cite{A15}.

In \cite{D19}, we apply our generalized Vojta inequality to a relative version of the Mordell-Lang problem in an abelian scheme $\mathcal{A} \stackrel{\pi}{\to} S$, where $S$ is an irreducible variety and everything is defined over $\bar{\mathbb{Q}}$. In the problem, one fixes an abelian variety $A_0$, defined over $\bar{\mathbb{Q}}$, a finite rank subgroup $\Gamma \subset A_0(\bar{\mathbb{Q}})$ and an irreducible closed subvariety $\mathcal{V}Ê\subsetÊ\mathcal{A}$ and studies the points $p \in \mathcal{V}$ of the form $\phi(\gamma)$ for an isogeny $\phi: A_0 \to \mathcal{A}_{\pi(p)}$, $\mathcal{A}_{\pi(p)}$ denoting the fiber of the abelian scheme over $\pi(p)$, and $\gamma \in \Gamma$.

In this application, it is crucial that we allow the $X_i$ to lie in different fibers of the abelian scheme. If the abelian scheme $\mathcal{A}$ is constant, an analogue of the intended height bound has been obtained by von Buhren in \cite{MR3614529}. In his case, the generalized Vojta inequality from \cite{MR2233693}, where $X_1 = X_2 = \cdots = X_m = X$, was sufficient, however for our intended application it is necessary to allow the $X_i$ to be different.

Let us recall the hypotheses which come into play. We use (almost) the same notation as in \cite{MR2233693} and we refer to that article for the history of Vojta's inequality.

For an $m$-tuple $a = (a_1,\hdots,a_m)$ of positive integers, we write
\[Ê\mathcal{N}_a = \bigotimes_{i=1}^{m}{p_i^{\ast}\mathcal{L}_i^{\otimes a_i}},\]
where $\mathcal{L}_i$ is a fixed very ample line bundle on $X_i$ and $p_i: X_1 \times \hdots \times X_m \to X_i$ is the natural projection. We fix a non-empty open subset $U^{0} \subset X_1Ê\times \hdots \times X_m$ and relate $a$ to an irreducible projective variety $\mathcal{X}$, provided with an open immersion $U^{0} \subset \mathcal{X}$ and a proper morphism $\pi: \mathcal{X}Ê\to X_1 \timesÊ\hdots \times X_m$ such that $\pi|_{U^{0}} = \id_{U^{0}}$, as well as to a nef line bundle $\mathcal{M}$ on $\mathcal{X}$ which satisfies some further conditions, specified below.

We assume that there exists a very ample line bundle $\mathcal{P}$ on $\mathcal{X}$, an injection $\mathcal{P} \hookrightarrow \mathcal{N}_a^{\otimes t_1}$ which induces an isomorphism on $U^{0}$ and a system of homogeneous coordinates $\Xi$ for $\mathcal{P}$ which are (by means of the aforementioned injection) monomials of multidegree $t_1a$ in the homogeneous coordinates $W^{(i)}Ê\subset \Gamma(X_i,\mathcal{L}_i)$, fixed in advance (we denote $\pi^{\ast}\mathcal{N}_a$ also by $\mathcal{N}_a$ and identify $p_i^{\ast}W^{(i)}$ and $\pi^{\ast}p_i^{\ast}W^{(i)}$ with $W^{(i)}$). By (a system of) homogeneous coordinates for a very ample line bundle we mean the set of pull-backs of the homogeneous coordinates on some $\mathbb{P}^{N'}$ under a closed embedding into $\mathbb{P}^{N'}$ that is associated to that line bundle.

We also assume that there exists an injection $(\mathcal{P} \otimes \mathcal{M}^{\otimes -1}) \hookrightarrow \mathcal{N}_a^{\otimes t_2}$ which induces an isomorphism on $U^{0}$ and that $\mathcal{P} \otimes \mathcal{M}^{\otimes -1}$ is generated by a family $Z$ of $M$ global sections on $\mathcal{X}$ which are polynomials $P_1,\hdots,P_M$ of multidegree $t_2a$ in the $W^{(i)}$ such that the height of the family of coefficients of all these polynomials, seen as a point in projective space, is at most $\sum_{i} a_i \delta_i$. The height of any polynomial is defined by considering the family of its coefficients as a point in an appropriate projective space. On projective space, the height is defined as in Definition 1.5.4 of \cite{MR2216774} by use of the maximum norm at the archimedean places.

The integer parameters $t_1, t_2, M$ and the real parameters $\delta_1, \hdots, \delta_m$ (all at least $1$) are fixed independently of the triple $(a,\mathcal{X},\mathcal{M})$. This triple permits to define the following two notions of height for an algebraic point $xÊ\in U^{0}(\bar{\mathbb{Q}})$:
\[Êh_{\mathcal{M}}(x) = h(\Xi(x)) - h(Z(x)),\]
\[Êh_{\mathcal{N}_a}(x) = a_1h\left(W^{(1)}(x)\right) + \hdots + a_mh\left(W^{(m)}(x)\right).\]

Our goal is to prove an inequality among these two numbers under certain assumptions about the intersection numbers of $\mathcal{M}$. Let therefore $\thetaÊ\geq 1$ and $\omega \geq -1$ be two integer parameters and set (with $\omega' = 3 +Ê\omega$)
\[Ê\Lambda = \theta(2t_1u_0)^{u_0}\left(\max_{1Ê\leq i \leq m}{N_i}+1\right)\prod_{i=1}^{m}{\deg(X_i)},\]
\[Ê\psi(u) = \prod_{j=u+1}^{u_0}{(\omega'j+1)}, \]
\[Êc_1 = c_2 = \Lambda^{\psi(0)},\]
\[Êc^{(i)}_{3} = \Lambda^{2\psi(0)}(Mt_2)^{u_0}(h(X_i)+\delta_i) \quad (i=1,\hdots,m),Ê\]
where $u_0 = \dim(X_1)+\hdots+\dim(X_m)$, $N_i+1 = \#W^{(i)}$ and the degrees and heights are computed with respect to the embeddings given by the $W^{(i)}$. We use here the (normalized) height of a closed subvariety of projective space as defined in \cite{MR1260106} (via Arakelov theory) or \cite{MR1341770} (via Chow forms). The two definitions yield the same height by Th\'{e}or\`{e}me 3 of \cite{MR1144338}.

The following theorem therefore generalizes Th\'{e}or\`{e}me 1.2 of \cite{MR2233693}.

\begin{thm}\label{thm:vojta}
Let $x \in U^{0}(\bar{\mathbb{Q}})$ be an algebraic point and $(a,\mathcal{X},\mathcal{M})$ a triple as defined above. Suppose that, for every subproduct of the form $Y = Y_1Ê\times \hdots \times Y_m$, where $Y_i \subset X_i$ is a closed irreducible subvariety that contains $x_i$, we have the following estimate
\[Ê(\mathcal{M}^{\cdot \dim(Y)}Ê\cdotÊ\mathcal{Y}) \geq \theta^{-1}\prod_{i=1}^{m}{(\deg(Y_i))^{-\omega}a_i^{\dim(Y_i)}},\]
where $\mathcal{Y}$ denotes the closure of $\pi^{-1}(YÊ\cap U^{0})$ in $\mathcal{X}$. Then we have
\[Êh_{\mathcal{N}_a}(x) \leq c_1 h_{\mathcal{M}}(x)\]
if furthermore $c_2a_{i+1} \leq a_i$ for every $i < m$ and $c^{(i)}_{3} \leq h\left(W^{(i)}(x_i)\right)$ for every $iÊ\leq m$.
\end{thm}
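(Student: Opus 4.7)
The plan is to follow the overall strategy of R\'emond's proof of Th\'eor\`eme 1.2 of \cite{MR2233693}, as extended by Ange \cite{A15} to handle products of distinct factors $X_1 \times \cdots \times X_m$. The architecture is a descent on subproducts: starting from $X_1\times\cdots\times X_m$ itself, one produces a section of a high power of $\mathcal{M}$ that vanishes to high order at $x$, uses a product-type zero estimate to cut out a proper subproduct $Y = Y_1\times\cdots\times Y_m \ni x$ along which that section still has large vanishing, and then iterates with $(Y, \mathcal{Y}, \mathcal{M}|_{\mathcal{Y}})$ in place of the ambient data. The numerical hypothesis on $(\mathcal{M}^{\cdot \dim Y}\cdot \mathcal{Y})$ feeds into the argument at every layer, and the factor $\psi(u)=\prod_{j=u+1}^{u_0}(\omega' j+1)$ records the accumulated loss as $\dim Y$ drops from $u_0$ down to $u$.

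First, I would construct the auxiliary section. For a suitably chosen integer $n$, an arithmetic Siegel lemma in the spirit of Zhang's arithmetic Hilbert--Samuel, powered by the lower bound on $(\mathcal{M}^{\cdot u_0}\cdot \mathcal{X})$ supplied by the hypothesis at $Y=X_1\times\cdots\times X_m$, produces a nonzero $\sigma \in H^0(\mathcal{X}, \mathcal{M}^{\otimes n})$ of controlled arithmetic degree that vanishes at $x$ to a multi-index $(s_1,\hdots,s_m)$ with $s_i$ essentially proportional to $n/a_i$. The contradiction hypothesis $h_{\mathcal{N}_a}(x) > c_1 h_{\mathcal{M}}(x)$ is what forces the vanishing at $x$ to be large. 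The gap condition $c_2 a_{i+1}\leq a_i$ calibrates the $s_i/a_i$ so that the hypotheses of the product theorem used next are satisfied, while the factor-by-factor lower bound $c_3^{(i)} \leq h(W^{(i)}(x_i))$ absorbs the extra contributions from $h(X_i)$, $\delta_i$, $M$, $t_2$ and the error in Hilbert--Samuel into the height of $x$.

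Next I would invoke a product theorem of Faltings--R\'emond type, in the generality required by Ange, which yields a proper subproduct $Y = Y_1\times\cdots\times Y_m$ through $x$ along which $\sigma$ still vanishes to an order comparable to the initial multi-index. The crucial output property is that it is a genuine subproduct and not merely an irreducible subvariety, since only for subproducts is the intersection hypothesis of the theorem available. Restricting to $\mathcal{Y}$, the hypothesis on $(\mathcal{M}^{\cdot \dim Y}\cdot \mathcal{Y})$ permits the same construction to be repeated with $\dim Y$ in place of $u_0$; the factor $(\omega' j + 1)$ in $\psi$ is the multiplicative loss incurred at the layer of dimension $j$. After at most $u_0$ iterations the descent terminates when $Y$ is a point, and arithmetic B\'ezout together with the height-of-subvariety formalism of \cite{MR1260106,MR1341770} translates the final vanishing statement into a numerical inequality that contradicts the assumed lower bound on $h_{\mathcal{N}_a}(x)$, yielding $h_{\mathcal{N}_a}(x)\leq c_1 h_{\mathcal{M}}(x) = \Lambda^{\psi(0)}h_{\mathcal{M}}(x)$.

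The main obstacle I anticipate is the calibration of constants in the multi-graded, non-symmetric setting. R\'emond's original proof exploits the symmetry $X_1=\cdots=X_m$ to work with a single degree $\deg X$, a single projective height $h(X)$, and a symmetric index; here every layer of the descent must separately propagate $\deg X_i$, $N_i$, $h(X_i)$, $\delta_i$ and the coordinate system $W^{(i)}$, and the constants $\Lambda$, $c_2$, $c_3^{(i)}$ must be large enough that the $u_0$-fold compounding of errors still fits inside $\Lambda^{\psi(0)}$. A secondary obstacle, and the reason the result is not a black-box reduction to \cite{MR2233693}, is establishing the subproduct version of the product theorem: one must verify that the geometric output of the zero estimate really decomposes as $Y_1\times\cdots\times Y_m$, which is exactly the point where Ange's generalization is needed and where the product structure of $X_1\times\cdots\times X_m$ interacts with the coordinates $W^{(i)}$ on the individual factors.
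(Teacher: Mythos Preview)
Your outline does not match the architecture of R\'emond's proof (and hence of this paper), despite claiming to follow it. You describe a proof by contradiction with an iterative descent: assume $h_{\mathcal{N}_a}(x) > c_1 h_{\mathcal{M}}(x)$, use this to force a section with large vanishing at $x$, apply the product theorem to cut out a proper subproduct, restrict and repeat until dimension zero. The paper does essentially the opposite. A subproduct $Y=Y_1\times\cdots\times Y_m\ni x$ of \emph{minimal} total dimension subject to explicit degree and height constraints (conditions (i)--(iv) of Section~2) is fixed once at the outset; there is no iteration. Minimality is then packaged as a non-existence statement (Proposition~\ref{prop:minimality}): no polynomial $U$ of controlled degree and height in the adapted coordinates of a \emph{single} factor $Y_l$ can vanish at $x_l$. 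The auxiliary section $s_d\in\Gamma(\mathcal{Y},\mathcal{Q}_d)$ is built on this fixed $\mathcal{Y}$ by a Siegel lemma applied to a Faltings complex (Proposition~\ref{prop:siegel}), with no contradiction hypothesis in sight; the intersection-number lower bound enters only through the dimension estimate for $\Gamma(\mathcal{Y},\mathcal{Q}_d)$. The product theorem is then invoked not to descend but to show, by contradiction with Proposition~\ref{prop:minimality}, that the index of $s_d$ at $x$ is \emph{small}. Finally the small-index bound is combined with place-by-place estimates for the derivatives $\partial^{i,l}(W_j^{(i)}/V_0^{(i)})(x)$ in adapted coordinates to obtain $h_{\mathcal{N}_a}(x)\le 4\epsilon^{-1}h_{\mathcal{M}}(x)\le c_1 h_{\mathcal{M}}(x)$ directly. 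This differentiation step (Section~5) is entirely absent from your sketch and is where the hypotheses $c_3^{(i)}\le h(W^{(i)}(x_i))$ are actually used, via the comparison $\sum_i a_i c_3^{(i)}\le h_{\mathcal{N}_a}(x)$ at the very last line---not at the Siegel step, as you suggest.

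Two of your anticipated obstacles also dissolve in the paper's setup. The product theorem used (R\'emond's Th\'eor\`eme~5.6, applied on $\mathbb{P}^{u_1}\times\cdots\times\mathbb{P}^{u_m}$ after pushing $s_d$ down via the norm along the adapted projection $\rho$) outputs a pair $(l,U)$ with $U$ a polynomial on a single factor; replacing $Y_l$ by an irreducible component of $\{U=0\}\cap Y_l$ automatically gives a subproduct, so no ``subproduct version of the product theorem'' is needed. And the asymmetric bookkeeping of $\deg X_i$, $N_i$, $h(X_i)$, $\delta_i$ is handled not by tracking constants through an $u_0$-fold iteration but through the single quantity $\Lambda_h$ and the inequality \eqref{eq:lambda-h-ineq}, which encode the minimal subproduct once and for all.
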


The fact that for each $i$ there is a different constant $c^{(i)}_3$ is the main difference with Ange's work, where there is just one $\delta$ instead of $\delta_1,\hdots,\delta_m$ (in our set-up, $\delta$ can be taken as $\max_{1Ê\leq i \leq m}{\delta_i}$) and there is just one constant $c_3$ defined as
\[ \Lambda^{2\psi(0)}(Mt_2)^{u_0}\max\left\{\max_{1Ê\leq i \leq m}h(X_i),\delta\right\}.\]
The condition that $x_i$ has large height then reads $c_3 \leq h\left(W^{(i)}(x_i)\right)$. Ange's inequality is a direct generalization of R\'emond's inequality (up to the slightly different definitions of $\Lambda$ and $c_1$).

If we set $\delta = \max_{1Ê\leq i \leq m}{\delta_i}$, then the inequality $c^{(i)}_3 \leq h\left(W^{(i)}(x_i)\right)$ follows from $2c_3 \leq h\left(W^{(i)}(x_i)\right)$, so Theorem \ref{thm:vojta} really is a generalization of R\'emond's work (up to the factor $2$ and the slightly different definitions of $\Lambda$ and $c_1$). In the application in \cite{D19}, the fact that $c^{(i)}_3$ depends only on $h(X_i)$ and $\delta_i$ and not on $h(X_j)$ or $\delta_j$ ($j \neq i$) is crucial. Ange's version of the inequality is therefore not sufficient for the application.

Naturally, we follow the proof in \cite{MR2233693} very closely with some minor changes: Firstly, the term $12u\psi(u)$ that appears in the last equation of \cite{MR2233693} should be replaced by $4\omega'u\psi(u)$; that is why we do not use Lemme 5.4 of \cite{MR2233693} and define $\Lambda$ slightly differently. Secondly, Corollaire 5.1 of \cite{MR2233693} does not apply if $x_j^{(i)} = 0$, which means that Corollaire 3.2 of \cite{MR2233693} has to be made more precise. Thirdly, in the last inequality in the proof of Proposition 4.2 of \cite{MR2233693}, a term bounding the contribution of the archimedean places when the $P_i$ are raised to the $d$-th power is missing. Fourthly, the factor $8$ in the upper bound $8(N+1)D_i\log(N+1)D_i$ for $\log 2f_2(u_i,D_i)$ given in the proof of Proposition 5.3 of \cite{MR2233693} has to be increased. Fifthly, we had to impose that $\mathcal{M}$ is nef in order to be able to translate the lower bound on its top self-intersection number into a lower bound for the dimension of a space of global sections.

\section{Reduction to a minimal subproduct}

We first consider a subproduct $Y = Y_1Ê\times \dots \times Y_m$ of minimal total dimension $u = u_1 + \cdots + u_m$, satisfying the following conditions:
\begin{enumerate}[label=(\roman*)]
\item $x_i \in Y_i(\bar{\mathbb{Q}})$ for all $1 \leq i \leq m$;
\item $d_i \leq \deg(X_i)\Lambda^{\psi(u)-1}$ for all $1 \leq i \leq m$;
\item $\prod_{i=1}^{m}{d_i} \leq (\prod_{i=1}^{m}{\deg(X_i)})\Lambda^{\psi(u)-1}$;
\item $\sum_{i=1}^{m}{a_i(h_i+\delta_i)} \leq 2^{-1}\Lambda^{2\psi(u)}(Mt_2)^{u_0-u}\sum_{i=1}^{m}{(a_i(h(X_i)+\delta_i))}$,
\end{enumerate}
where $u_i = \dim(Y_i)$, $d_i = \deg(Y_i)$ and $h_i = h(Y_i)$ (in the projective embedding defined by $W^{(i)}$). Such a subproduct certainly exists, since $X_1 \times \hdotsÊ\times X_m$ satisfies these conditions. Furthermore, we have $u > 0$ since otherwise $Y = \{x\}$ and therefore
\[Ê\sum_{i=1}^{m}{a_ic^{(i)}_3} \leq h_{\mathcal{N}_a}(x) \leq \sum_{i=1}^{m}{a_ih_i} \leq \frac{1}{2}\sum_{i=1}^{m}{a_ic^{(i)}_3} .\]

We use the definition of an adapted projective embedding on p. 466 of \cite{MR2233693}. By Proposition 2.2 of \cite{MR2233693}, we may define an embedding adapted to the closed subvariety $Y_i$ of $X_i$ by putting
\[ V_j^{(i)} = \sum_{k=0}^{N_i}{M_{jk}^{(i)}W_k^{(i)}}\]
with $M^{(i)}Ê\in \GL_{N_i+1}(\mathbb{Q})$, where the coefficients of the matrix $M^{(i)}$ are integers and bounded by $\max\left(1,\frac{d_i}{2}\right)$ in absolute value, at least if $Y_i \neq \mathbb{P}^{N_i}$. If $Y_i = \mathbb{P}^{N_i}$, then the notion of an adapted projective embedding is not defined in \cite{MR2233693}, but we may set $V_j^{(i)} = W_j^{(i)}$ ($j=0,\hdots,N_i$) and check that all the assertions about adapted embeddings made in this article also hold true in this case.

We now prove the equivalent of Proposition 3.1 in \cite{MR2233693}, introducing
\[\Lambda_h = \sum_{i=1}^{m}{a_i(h_i+\delta_i+d_i(u_i+1)\log 2d_i(N_i+1))},\]
which we will prove to verify
\begin{equation}\label{eq:lambda-h-ineq}
\Lambda_h < \Lambda^{2\psi(u)}(Mt_2)^{u_0-u}\sum_{i=1}^{m}{(a_i(h(X_i)+\delta_i))} = \Lambda^{2\psi(u)-2\psi(0)}(Mt_2)^{-u}\sum_{i=1}^{m}{a_ic^{(i)}_3}.
\end{equation}
In order to show this inequality (given condition (iv) from above), it suffices to show that
\[Ê \sum_{i=1}^{m}{a_id_i(u_i+1)\log 2d_i(N_i+1)} < 2^{-1}\Lambda^{2\psi(u)}(Mt_2)^{u_0-u}\sum_{i=1}^{m}{(a_i(h(X_i)+\delta_i))}\]
or even $\sum_{i=1}^{m}{d_i(u_i+1)\log 2d_i(N_i+1)} < 2^{-1}\Lambda^{2\psi(u)}(Mt_2)^{u_0-u}$. But since $\log 2d_i(N_i+1) < 2d_i(N_i+1)$, it follows from (ii) that the left-hand side is at most
\[ (u+m)\Lambda^{2\psi(u)-2}\max_{1Ê\leqÊi \leq m}\{2\deg(X_i)^2(N_i+1)\}Ê< 2^{-1}\Lambda^{2\psi(u)}\]
and now the claim is obvious.

\begin{prop}\label{prop:minimality}
There does not exist any pair $(l,U)$ such that $1 \leq l \leq m$ and $U\left(V^{(l)}\right)$ is a homogeneous polynomial in the first adapted coordinates $V_0^{(l)}, \hdots, V_{u_l}^{(l)}$ satisfying
\begin{enumerate}[label=(\alph*)]
\item $U\left(V^{(l)}\right)(x_l)=0;$
\item $U$ is not the zero polynomial;
\item $\deg(U) \leq \Lambda^{\omega' u \psi(u)}$;
\item $a_l h(U) \leq \Lambda^{2\psi(u-1)-2\psi(u)}\left(\frac{Mt_2}{4d_l}\right)\Lambda_h$.
\end{enumerate}
\end{prop}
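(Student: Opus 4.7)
My plan is to argue by contradiction: assume such a pair $(l,U)$ exists. Since $V_0^{(l)},\hdots,V_{u_l}^{(l)}$ are the first coordinates of an embedding of $X_l$ adapted to $Y_l$, they induce a finite surjection $Y_l \to \mathbb{P}^{u_l}$, so no nonzero homogeneous polynomial in these $u_l+1$ variables vanishes identically on $Y_l$. Combined with (a) and (b), the hypersurface $\{U=0\}$ cuts $Y_l$ in a proper closed subvariety passing through $x_l$. Pick an irreducible component $Y''_l$ through $x_l$ and set
\[ Y' = Y_1 \times \cdots \times Y_{l-1} \times Y''_l \times Y_{l+1} \times \cdots \times Y_m,\]
a subproduct of total dimension $u-1<u$. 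Condition (i) is automatic for $Y'$; the goal is to derive a contradiction with the minimality of $Y$ by showing that (ii), (iii) and (iv) also hold for $Y'$.

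Geometric B\'ezout gives $d''_l := \deg(Y''_l) \leq d_l \deg(U)$. Combining $d_l \leq \deg(X_l)\Lambda^{\psi(u)-1}$ from (ii) with the bound $\deg(U) \leq \Lambda^{\omega' u \psi(u)}$ of (c) and the key recursion $\psi(u-1) = (\omega' u + 1)\psi(u)$, we get $d''_l \leq \deg(X_l)\Lambda^{\psi(u-1)-1}$, which is (ii) for $Y'$; the same computation applied to $(\prod_{i\neq l}d_i)\,d''_l \leq (\prod_i d_i)\deg(U)$ yields (iii) for $Y'$.

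For (iv) I would invoke an arithmetic B\'ezout theorem (in the spirit of Th\'eor\`eme~3 of \cite{MR1341770}, as used by R\'emond) to bound
\[ h''_l \leq \deg(U)\, h_l + d_l\, h(U) + C_l\, d_l \deg(U),\]
where $h''_l = h(Y''_l)$ and $C_l$ collects logarithmic corrections in $N_l$, $u_l$ and $d_l\deg(U)$. Using $2\psi(u-1)-2\psi(u) = 2\omega' u\psi(u)$: condition (d) controls the term $a_l d_l h(U)$ by $\Lambda^{2\psi(u-1)-2\psi(u)}\frac{Mt_2}{4}\Lambda_h$; the term $a_l h_l \deg(U)$ is bounded by $\Lambda^{(2\psi(u-1)-2\psi(u))/2}\Lambda_h$, which is strictly smaller; and the correction $a_l C_l d_l \deg(U)$ is absorbed into the logarithmic summand of $\Lambda_h$ by the same estimate used to establish (\ref{eq:lambda-h-ineq}) (after inserting the degree bounds from (ii) and (iii) for $Y'$). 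Summing over $i$ (the contribution from $i \neq l$ is unchanged) and using (\ref{eq:lambda-h-ineq}) to replace $\Lambda_h$, we obtain
\[ \sum_i a_i(h'_i + \delta_i) \leq 2^{-1}\Lambda^{2\psi(u-1)}(Mt_2)^{u_0-u+1}\sum_i a_i(h(X_i)+\delta_i),\]
which is (iv) for $Y'$, producing the desired contradiction.

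The principal difficulty lies in this last step: each arithmetic correction term and each logarithmic contribution to the new $\Lambda_h$ must be shown to fit strictly inside the exponent budget $2\omega' u \psi(u)$ of $\Lambda$ and the extra factor $Mt_2$ coming from (iv). A secondary issue, flagged in the introduction, is that arithmetic B\'ezout bounds the height of the full intersection cycle; to extract a bound on the single component $Y''_l$ one uses that heights of irreducible components are dominated by the height of the whole cycle.
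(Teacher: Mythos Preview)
Your argument is essentially the paper's: contradiction via an irreducible component of $Y_l\cap\{U=0\}$ through $x_l$, with conditions (ii)--(iv) for the smaller subproduct checked by geometric and arithmetic B\'ezout, the recursion $\psi(u-1)=(\omega'u+1)\psi(u)$, and inequality~\eqref{eq:lambda-h-ineq}. The one step you gloss over is that the degrees and heights $d_l,h_l,d''_l,h''_l$ are computed in the fixed $W^{(l)}$-embedding while $U$ is a polynomial in the adapted coordinates $V^{(l)}=M^{(l)}W^{(l)}$; the paper therefore first rewrites $U(V^{(l)})$ as $U'(W^{(l)})$ via the integer matrix $M^{(l)}$, picking up an extra $\deg(U)\log d_l(N_l+1)(u_l+1)$ in the height before applying B\'ezout, and then absorbs this term, the B\'ezout remainder $d_l\sqrt{N_l}$, and $\deg(U)\sum_i a_i(h_i+\delta_i)$ together into $3\Lambda^{\omega'u\psi(u)}\Lambda_h\leq \Lambda^{2\omega'u\psi(u)}(Mt_2/4)\Lambda_h$---exactly the kind of bookkeeping you anticipate in your final paragraph.
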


\begin{proof}
We assume the contrary and define $Y'_l$ as an irreducible component containing $x_l$ of the closed subvariety of $Y_l$ defined by the equation $U\left(V^{(l)}\right) = 0$ and we verify that the subproduct $Y'$ obtained by replacing $Y_l$ by $Y'_l$ in $Y$ contradicts the minimality of the latter. We have $Y' = Y_1' \times \dots \times Y_m'$ with $Y_i' = Y_i$ for all $i \neq l$. By (a), condition (i) holds for $Y'$. By (b) and the definition of an adapted embedding, $Y'$ is a proper subvariety of $Y$.

The polynomial $U\left(V^{(l)}\right)$ corresponds by means of $M^{(l)}$ to a polynomial $U'\left(W^{(l)}\right)$, where $\deg(U') = \deg(U)$ and
\[Êh(U') \leq h(U) + \deg(U)\log(N_l+1)\max\left(1,\frac{d_l}{2}\right)+\log\binom{\deg(U)+u_l}{\deg(U)}.\]
As
\[\binom{\deg(U)+u_l}{\deg(U)} = \prod_{i=1}^{\deg(U)}{\left(1+\frac{u_l}{i}\right)} \leq (1+u_l)^{\deg(U)},\]
it follows that
\begin{equation}\label{eq:height-ustrich}
h(U') \leq h(U) + \deg(U)\log d_l(N_l+1)(u_l+1).
\end{equation}

The (arithmetic as well as geometric) theorems of B\'{e}zout yield
\[ \deg(Y'_l) \leq \deg(U')d_l\]
and
\[Êh(Y'_l) \leq \deg(U')h_l + d_l\left(h(U')+\sqrt{N_l}\right).\]
For the arithmetic B\'{e}zout theorem, we use Th\'{e}or\`{e}me 3.4 and Corollaire 3.6 of \cite{MR1837829}, where the modified height $h_m$ used there can be bounded thanks to Lemme 5.2 of \cite{MR1838088}. Together with (c), the first line implies that $\deg(Y'_l) \leq d_l\Lambda^{\psi(u-1)-\psi(u)}$, since by definition $\psi(u-1) = (\omega'u+1)\psi(u)$. This shows that $Y'$ satisfies conditions (ii) and (iii).

From the second line together with \eqref{eq:height-ustrich}, (c) and (d), we deduce that
\begin{align*}
\sum_{i=1}^{m}{a_i(h(Y'_i)+\delta_i)} \leq d_la_lh(U)+d_la_l\Lambda^{\omega'u\psi(u)}\log d_l(N_l+1)(u_l+1)Ê\\
+ d_la_l\sqrt{N_l} + \Lambda^{\omega'u\psi(u)}\sum_{i=1}^{m}{a_i(h_i+\delta_i)} \leq d_la_lh(U) + 3\Lambda^{\omega'u\psi(u)}\Lambda_h\\
\leq \Lambda^{2\psi(u-1)-2\psi(u)}\left(\frac{Mt_2}{4}\right)\Lambda_h+3\Lambda^{\omega'u\psi(u)}\Lambda_h.
\end{align*}

Finally, we have $3\Lambda^{\omega'u\psi(u)} \leq \Lambda^{2\omega'u\psi(u)}\left(\frac{Mt_2}{4}\right) =\Lambda^{2\psi(u-1)-2\psi(u)}\left(\frac{Mt_2}{4}\right)$. It then follows from \eqref{eq:lambda-h-ineq} that $Y'$ satisfies condition (iv) as well and we get the desired contradiction.
\end{proof}

We proceed to deduce from this an equivalent of Corollaire 3.2 in \cite{MR2233693} (with a modification of the last assertion). Let us mention that by Lemme 2.3 of \cite{MR2233693}, there exist polynomial relations
\[ÊP_j^{(i)}\left(V_0^{(i)},\hdots,V_{u_i}^{(i)},V_j^{(i)}\right) = Q_j^{(i)}\left(V_0^{(i)},\hdots,V_{u_i}^{(i)},W_j^{(i)}\right) = 0 \mbox{ in }Ê\Gamma\left(Y_i,\mathcal{L}_i^{\otimes d_i}\right)\]
for all $1 \leq i \leq m$ and all $0 \leq j \leq N_i$. The polynomials $P_j^{(i)}(T)$ and $Q_j^{(i)}(T)$ are homogeneous of degrees $d_i$, monic in their last variable $T_{u_i+1}$ and equal to a power of an irreducible polynomial (we denote the corresponding exponent for $Q_j^{(i)}$ by $b_{i,j}$). Furthermore, we know from the same lemma that the height of the family $B_i$ of all the coefficients of the $P_j^{(i)}$ and the $Q_j^{(i)}$ for fixed $i$ (seen as a point in projective space) can be estimated from above as
\begin{equation}\label{eq:height-bi}
h(B_i) \leq h_i + d_i(u_i+1)\log d_i(N_i+1).
\end{equation}

\begin{cor}\label{cor:dacorollary}
For every index $1 \leq i \leq m$, we have that
\begin{enumerate}
\item the morphism $\rho_i: Y_i \to \mathbb{P}^{u_i}$, defined by the first adapted coordinates $V_0^{(i)}, \hdots, V_{u_i}^{(i)}$, is finite, surjective and \'{e}tale at $x_i \in Y_i(\bar{\mathbb{Q}})$;
\item $V_0^{(i)}(x_i) \neq 0$;
\item for every index $0 \leq jÊ\leq N_i$ such that $W_j^{(i)}Ê\neq 0$ in $\Gamma(Y_i,\mathcal{L}_i)$, we have
\[ W_j^{(i)}\frac{\partial^{b_{i,j}}Q_j^{(i)}}{\partial T_{u_i+1}^{b_{i,j}}}\left(1,\frac{V_1^{(i)}}{V_0^{(i)}},\hdots,\frac{V_{u_i}^{(i)}}{V_0^{(i)}},\frac{W_j^{(i)}}{V_0^{(i)}}\right)(x_i) \neq 0.\]
\end{enumerate}
\end{cor}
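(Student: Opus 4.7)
The plan is to prove each item by contradiction, using Proposition \ref{prop:minimality}: for each failure, I exhibit a non-zero homogeneous polynomial $U(V^{(l)})$ in the first adapted coordinates that vanishes at $x_l$ and satisfies the degree and height bounds (c), (d). Finiteness and surjectivity of $\rho_i$ are immediate from the definition of an adapted projective embedding (p.~466 of \cite{MR2233693}), which says precisely that $V_0^{(i)}, \ldots, V_{u_i}^{(i)}$ restrict to a Noether normalization of $Y_i$. For (2), if $V_0^{(l)}(x_l) = 0$, then $U := V_0^{(l)}$ is non-zero of degree $1$ and height $0$, so (a)--(d) hold trivially.

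For (3), fix $l$ and $j$ with $W_j^{(l)} \not\equiv 0$ on $Y_l$ and write $Q_j^{(l)} = R^{b_{l,j}}$ with $R$ irreducible, monic in $T_{u_l+1}$ of degree $e = d_l/b_{l,j}$. Since $R(V^{(l)}, W_j^{(l)}) = 0$ on $Y_l$, a direct power-rule computation $\partial_T^{b_{l,j}}(R^{b_{l,j}}) = b_{l,j}!\,(R_T)^{b_{l,j}} + R \cdot (\cdots)$ shows that the expression in (3) equals, up to a non-zero power of $V_0^{(l)}(x_l)$ coming from homogenization and up to a non-zero numerical factor, the product $W_j^{(l)}(x_l) \cdot R_T(V^{(l)}(x_l), W_j^{(l)}(x_l))^{b_{l,j}}$. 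Its vanishing splits into two cases. Case~A: $W_j^{(l)}(x_l) = 0$, in which I take $U(V^{(l)}) := Q_j^{(l)}(V^{(l)}, 0)$, non-zero because $R$ is irreducible and distinct from $T_{u_l+1}$ (otherwise $W_j^{(l)} \equiv 0$ on $Y_l$), and vanishing at $x_l$. Case~B: $W_j^{(l)}(x_l) \neq 0$ but $R_T(V^{(l)}(x_l), W_j^{(l)}(x_l)) = 0$, so $R(V^{(l)}(x_l), T_{u_l+1})$ has a double root at $W_j^{(l)}(x_l)$ and the discriminant $U(V^{(l)}) := \operatorname{disc}_{T_{u_l+1}} R$ vanishes at $x_l$; it is non-zero since $R$ is separable in characteristic $0$. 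The étaleness part of (1) follows from (3) applied to a primitive element of $K(Y_l)/K(\mathbb{P}^{u_l})$, which can always be chosen of the form $W_j^{(l)}/V_0^{(l)}$ with $W_j^{(l)} \not\equiv 0$ on $Y_l$.

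It remains to verify (c) and (d). In Case~A the degree of $U$ is $d_l$ and, by \eqref{eq:height-bi}, $h(U) \leq h(B_l) \leq h_l + d_l(u_l+1)\log d_l(N_l+1)$. In Case~B, Sylvester bounds give $\deg(U) \leq 2e(e-1) \leq 2 d_l^2$ and $h(U) = O(d_l\, h(B_l) + d_l^2 \log d_l)$, after bounding $h(R)$ by $h(Q_j^{(l)})$ via Mahler measure (losing only $O(d_l)$). By condition (ii), $d_l \leq \deg(X_l)\Lambda^{\psi(u)-1}$, so $\deg(U) \leq \Lambda^{\omega'u\psi(u)}$ using $\psi(u-1)-\psi(u) = \omega'u\psi(u)$ and the fact that $\prod_i \deg(X_i)$ is baked into $\Lambda$, giving (c). For (d) one checks that $a_l h(U)/(4 d_l)$ is dominated by a constant multiple of the $l$-th summand $a_l(h_l + \delta_l + d_l(u_l+1)\log 2d_l(N_l+1))$ of $\Lambda_h$, the remaining constant being absorbed by $\Lambda^{2\psi(u-1)-2\psi(u)}(Mt_2/4)$, which is a large positive power of $\Lambda$ as soon as $u \geq 1$.

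The main obstacle is the tight height estimate in Case~B: the discriminant introduces an extra factor of $d_l$ in $h(U)$, which must be canceled by the $1/(4 d_l)$ on the right-hand side of (d). This is exactly why the definition of $\Lambda_h$ carries the term $d_l(u_l+1)\log 2d_l(N_l+1)$, and the bookkeeping requires care with archimedean contributions and with the precise relation between $h(R)$ and $h(Q_j^{(l)})$. Handling the edge case $W_j^{(l)}(x_l) = 0$ via the factor $W_j^{(i)}$ in the expression of (3) is the precise "making more precise" of Corollaire 3.2 of \cite{MR2233693} flagged in the introduction.
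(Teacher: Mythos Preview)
Your approach matches the paper's: both argue by contradiction via Proposition~\ref{prop:minimality}, taking $U=V_0^{(l)}$ for (2), $U=Q_j^{(l)}(V^{(l)},0)$ in Case~A of (3), and a discriminant in Case~B. The paper records the unified bounds $\deg(U)\le 2d_i^2$ and $h(U)\le 6N_i d_i^3+2d_i h(B_i)$ (referring to Corollaire~3.2 of \cite{MR2233693} for the computations) and then checks (c), (d), whereas you leave the height as a big-$O$; that is a matter of write-up rather than method.

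There is, however, a genuine gap in your étaleness argument. You claim that a primitive element of $K(Y_l)/K(\mathbb{P}^{u_l})$ can always be chosen of the form $W_j^{(l)}/V_0^{(l)}$ for some single index $j$, and then invoke (3) for that $j$. This claim is unjustified: the $W_j^{(l)}$ are the \emph{original} coordinates, and nothing guarantees that one of them alone generates the function field extension. The paper handles this differently: it uses the \emph{adapted} coordinate $V_{u_l+1}^{(l)}$ and its polynomial $P_{u_l+1}^{(l)}$, observing that $P_{u_l+1}^{(l)}$ is not merely a power of an irreducible polynomial but is itself irreducible (any irreducible factor already has degree $d_l=\deg Y_l$), so that $V_{u_l+1}^{(l)}/V_0^{(l)}$ \emph{is} a primitive element; the discriminant argument is then applied to $P_{u_l+1}^{(l)}$, with reference to Lemme~4.3 of \cite{MR1765539}. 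Your route can be repaired without a primitive element: item~(3) for \emph{every} $j$ with $W_j^{(l)}\not\equiv 0$ on $Y_l$ says that each generator $\alpha_j=W_j^{(l)}/V_0^{(l)}$ of $\mathcal{O}_{Y_l,x_l}$ over $\mathcal{O}_{\mathbb{P}^{u_l},\rho_l(x_l)}$ is a simple root of its minimal polynomial at $x_l$, whence each $d\alpha_j$ vanishes in the stalk of $\Omega_{Y_l/\mathbb{P}^{u_l}}$ at $x_l$ and the map is unramified there. Either fix is easy, but as written the step is incomplete.
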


\begin{proof}
That the morphism $\rho_i$ is finite and surjective follows from the definition of adapted embeddings (see \cite{MR2233693}, Section 2.1). If one of the three assertions were not true, we could construct a pair $\left(i,U\left(V^{(i)}\right)\right)$ that would contradict Proposition \ref{prop:minimality}, with $\deg(U) \leq 2d_i^2$ and $h(U) \leq 6N_i d_i^3+2d_ih(B_i)$.

We refer to Corollaire 3.2 of \cite{MR2233693} for the proof -- in the case that $W_j^{(i)} \neq 0$, $W_j^{(i)}(x_i) = 0$ it suffices to take $U\left(V^{(i)}\right) = Q_j^{(i)}\left(V_0^{(i)},\hdots,V_{u_i}^{(i)},0\right)$. Note that $P^{(i)}_{u_i+1}$ is not only a power of an irreducible polynomial, but in fact irreducible, since its degree is equal to the degree of $Y_i$, which is also equal to the degree of any irreducible factor of $P^{(i)}_{u_i+1}$. Hence, its discriminant does not vanish identically. That the morphism $\rho_i$ is \'{e}tale at $x_i$ is proved in the same way as in the proof of Lemme 4.3 in \cite{MR1765539}.
\end{proof}

\section{Constructing a section of small height}

Following Section 4 of \cite{MR2233693}, we set
\[Ê\epsilon = \frac{1}{2u\theta}\frac{1}{(t_1m)^u}\prod_{i=1}^{m}{d_i^{-1-\omega}}\]
and define a family of sections $Z_d' \subset \Gamma(\mathcal{X},\mathcal{M}^{\otimes -d}Ê\otimes \mathcal{P}^{\otimes d} \otimes \mathcal{N}_a^{\otimesÊd \epsilon})$ of cardinality $M' = M(N_1+1)\cdots(N_m+1)$ for every $dÊ\in \epsilon^{-1}\mathbb{N} \subset \mathbb{N} = \{1,2,3,\hdots\}$ by
\[ÊZ_d' = \left\{Ê\zeta^{\otimes d}Ê\otimes \left(W_{j_1}^{(1)}\right)^{\otimes d\epsilon a_1} \otimes \cdotsÊ\otimes \left(W_{j_m}^{(m)}\right)^{\otimes d\epsilon a_m}; \zeta \in Z, W_{j_i}^{(i)} \in W^{(i)}\right\}.\]
The proof of Proposition 4.1 of \cite{MR2233693} then goes through without any major modifications (given that $\mathcal{M}$ is nef, see below). It yields a natural number $d_0$ that we choose sufficiently large so that for each natural number $dÊ\geq d_0$ there exists a basis of $\Gamma(\mathcal{Y},\mathcal{P}^{\otimes d})$ that consists of monomials of degree $d$ in the elements of $\Xi$. We obtain the following equivalent of Proposition 4.2 in \cite{MR2233693}.

\begin{prop}\label{prop:siegel}
For $d \in \epsilon^{-1}\mathbb{N} \cap d_0\mathbb{N}$, we write $\mathcal{Q}_d = \mathcal{M}^{\otimes d} \otimes \mathcal{N}_a^{\otimes -d\epsilon}$ and fix a basis of $\Gamma(\mathcal{Y},\mathcal{P}^{\otimes d})$ that consists of monomials in the sections $\Xi$ of degree $d$.

Then there exists a section $0 \neq s \in \Gamma(\mathcal{Y},\mathcal{Q}_d)$ such that the height of $s$, defined as the height of the family of coefficients of the sections $sÊ\otimes Z_d'$ with respect to the fixed basis, seen as a point in projective space, satisfies
\[Êh(s) \leq \frac{2M'd}{u\epsilon}(t_1+2t_2+\epsilon)\Lambda_h+o(d).\]
\end{prop}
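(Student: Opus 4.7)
The plan is to follow R\'emond's proof of Proposition~4.2 of \cite{MR2233693} closely, incorporating the second, third and fifth modifications announced in the introduction. The core technique is an arithmetic Siegel's lemma applied to the map
$$\phi\colon \Gamma(\mathcal{Y},\mathcal{Q}_d)\longrightarrow \Gamma(\mathcal{Y},\mathcal{P}^{\otimes d})^{M'},\qquad s\longmapsto (s\otimes z)_{z\in Z_d'},$$
which is injective because $Z_d'$ globally generates $\mathcal{M}^{\otimes -d}\otimes\mathcal{P}^{\otimes d}\otimes\mathcal{N}_a^{\otimes d\epsilon}$ on $\mathcal{Y}$ (inherited from $Z$ generating $\mathcal{P}\otimes\mathcal{M}^{\otimes -1}$ and each $W^{(i)}$ generating $\mathcal{L}_i$). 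The target carries the fixed basis of degree-$d$ monomials in $\Xi$, furnishing an integer structure.

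The first step is to produce many sections. Asymptotic Riemann-Roch applied to the nef line bundle $\mathcal{M}$ on $\mathcal{Y}$ yields
$$\dim_{\bar{\mathbb{Q}}}\Gamma(\mathcal{Y},\mathcal{Q}_d) \geq \frac{d^u}{u!}\bigl((\mathcal{M}^{\cdot u}\cdot\mathcal{Y})-u\epsilon(\mathcal{M}^{\cdot u-1}\cdot\mathcal{N}_a\cdot\mathcal{Y})+\cdots\bigr)+o(d^u).$$
The value of $\epsilon$ fixed at the start of Section~3 is rigged so that, using $(\mathcal{M}^{\cdot u}\cdot\mathcal{Y})\geq\theta^{-1}\prod_i d_i^{-\omega}a_i^{u_i}$ from the hypothesis of Theorem~\ref{thm:vojta}, this is at least $u\epsilon/2$ times the leading coefficient of $\dim\Gamma(\mathcal{Y},\mathcal{P}^{\otimes d})\sim \frac{d^u}{u!}(\mathcal{P}^{\cdot u}\cdot\mathcal{Y})$. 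The nefness of $\mathcal{M}$ is essential for the passage from positive intersection numbers to global sections and is the fifth modification announced in the introduction.

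The second step bounds the arithmetic height of $\phi(\Gamma(\mathcal{Y},\mathcal{Q}_d))$ in $\bar{\mathbb{Q}}^{M'\dim\Gamma(\mathcal{Y},\mathcal{P}^{\otimes d})}$. Each matrix entry of $\phi$, relative to a chosen basis of $\Gamma(\mathcal{Y},\mathcal{Q}_d)$ and the monomial basis of the target, is obtained by writing $s\otimes z$ in the $W^{(i)}$ and reducing modulo $\mathcal{Y}$ via the polynomial relations $P_j^{(i)},Q_j^{(i)}$ of Lemme~2.3 of \cite{MR2233693} (whose heights $h(B_i)$ are bounded by \eqref{eq:height-bi}), then passing through the adapted coordinates $V^{(i)}$. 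The resulting height contributions are $t_1 d$ from $\Xi$, $2t_2 d$ from the $\zeta\in Z$, and $\epsilon d$ from the $W_{j_i}^{(i)}$, each weighted by $\Lambda_h$; the factor of $2$ in $2t_2 d$ is the archimedean correction for raising $Z$ to the $d$-th power, missing in the original and supplied here as the third modification.

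Finally, Bombieri-Vaaler's arithmetic Siegel's lemma produces a nonzero $s$ such that $\phi(s)$ has projective height bounded by the arithmetic covolume of the image lattice divided by its rank. Combining the per-row bound $d(t_1+2t_2+\epsilon)\Lambda_h+o(d)$ with the ratio $M'\dim\Gamma(\mathcal{Y},\mathcal{P}^{\otimes d})/\dim\Gamma(\mathcal{Y},\mathcal{Q}_d)\leq 2M'/(u\epsilon)+o(1)$ from the first step yields the announced $\frac{2M'd}{u\epsilon}(t_1+2t_2+\epsilon)\Lambda_h+o(d)$. I expect the main obstacle to be the bookkeeping in the third paragraph: tracking all height contributions through several changes of basis (from $\Xi$ to $W^{(i)}$ to $V^{(i)}$ and back) and correctly supplying the previously missing archimedean correction when the polynomials defining $Z$ are raised to the $d$-th power.
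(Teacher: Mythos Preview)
Your overall strategy—dimension estimate via asymptotic Riemann-Roch (using nefness of $\mathcal{M}$), a height bound on the equations cutting out the image, then Siegel's lemma with Dirichlet exponent $\dim F/\dim\Gamma(\mathcal{Y},\mathcal{Q}_d)$—matches the paper's proof. However, your description of the second step contains a genuine gap.

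You propose to bound ``each matrix entry of $\phi$, relative to a chosen basis of $\Gamma(\mathcal{Y},\mathcal{Q}_d)$''. This is circular: we have no a priori basis of $\Gamma(\mathcal{Y},\mathcal{Q}_d)$ with controlled height—producing one small section is precisely the goal of the proposition. The paper avoids this by invoking the \emph{Faltings complex}
\[
0 \longrightarrow \mathcal{Q}_d \longrightarrow (\mathcal{P}^{\otimes d})^{\oplus M'} \stackrel{\psi}{\longrightarrow} (\mathcal{N}_a^{\otimes d(t_1+t_2+\epsilon)})^{\oplus (M')^2},
\]
where $\psi$ sends $(f_z)_{z\in Z_d'}$ to $(z'f_z - zf_{z'})_{z,z'}$. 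The image of your map $\phi$ inside $F = \Gamma(\mathcal{Y},\mathcal{P}^{\otimes d})^{M'}$ coincides with the \emph{kernel} of the map induced by $\psi$ on global sections, and the coefficients of these linear forms are built entirely from the explicit sections $z\in Z_d'$ (polynomials in the $W^{(i)}$ of known multidegree and bounded height) together with the reduction data $B_i$ via Lemme~2.5 of \cite{MR2233693}. No basis of $\Gamma(\mathcal{Y},\mathcal{Q}_d)$ is ever needed. Your computational description—writing products in the $W^{(i)}$, reducing modulo $\mathcal{Y}$, and passing through adapted coordinates—is in fact exactly what one does to compute the entries of $\psi$, not of $\phi$; once you make this substitution the rest of your sketch goes through and yields the paper's bound $d(t_1+2t_2+\epsilon)\Lambda_h + o(d)$ on the height of the family of linear forms, after which Lemme~2.6 of \cite{MR2233693} (Siegel) finishes the argument.
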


\begin{proof}
The dimension estimate
\[ \dim \Gamma(\mathcal{Y},\mathcal{Q}_d)Ê\geq \frac{d^u}{4\theta u!}\prod_{i=1}^{m}{d_i^{-\omega}a_i^{u_i}}+O(d^{u-1})\]
given in Proposition 4.1 of \cite{MR2233693} is still valid, since the intersection numbers are formally the same. Here, we need however that $\mathcal{M}$ is nef in order to translate the lower bound for its top self-intersection number into a lower bound for the dimension of a space of global sections through the asymptotic Riemann-Roch theorem (see \cite{MR1440180}, Theorem VI.2.15).

In the Faltings complex on $\mathcal{Y}$ defined by the family $Z_d'$ of cardinality $M'$
\[Ê0 \to \mathcal{Q}_d \to \left(\mathcal{P}^{\otimes d}\right)^{\oplus M'} \to \left(\mathcal{N}_a^{\otimes d(t_1+t_2+\epsilon)}\right)^{\oplus (M')^2}\]
the image of $\Gamma(\mathcal{Y},\mathcal{Q}_d)$ in $F = \Gamma(\mathcal{Y},\mathcal{P}^{\otimes d})^{M'}$ coincides with the kernel of a family of linear forms in the coordinates with respect to the fixed basis. This family can be chosen such that the coefficients of the linear forms lie in a number field that is independent of $d$ and the height of the set of all coefficients, seen as a point in projective space, is at most $d(t_1+2t_2+\epsilon)\Lambda_h + o(d)$: in order to show this, we follow the proof of Proposition 4.2 of \cite{MR2233693} by applying Lemme 2.5 in \cite{MR2233693} with $n_i = N_i$ and use \eqref{eq:height-bi} to bound $h(B_i)$. Note that when estimating $h(P_1^d,\hdots,P_M^d)$ as in the proof of Proposition 4.2, one obtains by well-known height estimates an upper bound of
\[Êd\sum_{i=1}^{m}{a_i\delta_i}+dt_2\sum_{i=1}^{m}{a_i\log(N_i+1)}\]
(the second summand, coming from the archimedean places, is missing in \cite{MR2233693}).

Furthermore, the injection $\mathcal{P}^{\otimes d} \hookrightarrow \mathcal{N}_a^{\otimes dt_1}$ yields that
\[Ê\dim F \leq M'\prod_{i=1}^{m}{\frac{d_i}{u_i!}(dt_1a_i)^{u_i}}+o(d^u)\]
and so $\log \dim F = o(d)$. Hence, the Dirichlet exponent of the system can be estimated as
\[Ê\frac{\dim F}{\dim \Gamma(\mathcal{Y},\mathcal{Q}_d)} \leq \frac{2M'}{u\epsilon} + o(1)\]
and the proposition follows from the Siegel lemma (Lemme 2.6 in \cite{MR2233693}).
\end{proof}

\section{The index is small}

We now replace $\mathcal{Y}$ by a sufficiently small open subset of $\mathcal{Y}$ that contains $x$. According to Corollary \ref{cor:dacorollary}, we can in particular assume that each section $V_0^{(i)}$ vanishes nowhere on this subset and suppose that the sheaf of differentials $\Omega_{\mathcal{Y}/\bar{\mathbb{Q}}}$ is generated by the differentials of the $V_j^{(i)}/V_0^{(i)}$ ($i=1,\hdots,m$, $1 \leq j \leq u_i$). We can furthermore suppose that $\mathcal{P}$, $\mathcal{M}$ and $\mathcal{N}_a$ all can be trivialized over this subset.

We fix an isomorphism $\mathcal{Q}_d \simeq \mathcal{O}_{\mathcal{Y}}$ and consider the index $\sigma$ (as defined in Section 5.2 of \cite{MR2233693}) of the section $s_d \in \Gamma(\mathcal{Y},\mathcal{Q}_{d})$ that was constructed in the preceding proposition with respect to the weight $dt_1a$ in $x$.

\begin{lem}
With notations as above, we have
\[Ê\sigma \leq (4t_1\max_{i}{d_i(N_i+1)})^{-1}\epsilon\]
for $d \inÊ\epsilon^{-1}\mathbb{N}Ê\cap d_0\mathbb{N}$ sufficiently large.
\end{lem}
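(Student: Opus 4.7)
My plan is to argue by contradiction, adapting the proof of Proposition~5.3 of \cite{MR2233693} to the present product setting. Suppose $\sigma > (4t_1 \max_i d_i(N_i+1))^{-1} \epsilon$. I aim to produce an index $1 \leq l \leq m$ together with a non-zero homogeneous polynomial $U$ in the first adapted coordinates $V_0^{(l)}, \ldots, V_{u_l}^{(l)}$ such that $U(V^{(l)})(x_l) = 0$ and such that the quantitative bounds (c) and (d) of Proposition~\ref{prop:minimality} are satisfied, thereby contradicting its conclusion.

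The construction should proceed as follows. On the chosen open subset, I trivialize $\mathcal{P}$, $\mathcal{M}$ and $\mathcal{N}_a$, so that $s_d$ becomes a regular function there; since the $V^{(i)}_j/V^{(i)}_0$ (for $1 \leq j \leq u_i$, $1 \leq i \leq m$) are local parameters at $x$ by Corollary~\ref{cor:dacorollary}, the index $\sigma$ coincides with the weighted vanishing order of $s_d$ at $x$ with respect to the weights $(dt_1 a_i)^{-1}$. Some mixed partial derivative of $s_d$ of weighted degree $\sigma$ is then non-zero at $x$; choosing $l$ to be a direction contributing most to this weighted degree, and expanding $s_d$ in the basis of $\Gamma(\mathcal{Y},\mathcal{P}^{\otimes d})$ given by monomials in $\Xi$ of degree $d$ (hence of multidegree $dt_1 a$ in the $W^{(i)}$, and therefore also in the $V^{(i)}$), I extract, by an appropriate restriction that freezes the coordinates of the factors $i \neq l$ at their values at $x$, a non-zero polynomial $U(V^{(l)})$ of degree at most $dt_1 a_l$ which vanishes at $x_l$.

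It then remains to verify conditions (c) and (d) of Proposition~\ref{prop:minimality}, (a) and (b) being automatic. For (c), the estimate $\deg(U) \leq dt_1 a_l$, combined with the assumed lower bound on $\sigma$, the defining formula $\epsilon^{-1} = 2u\theta(t_1 m)^u \prod_i d_i^{1+\omega}$, condition~(ii), the recursion $\psi(u-1) = (\omega'u+1)\psi(u)$, and the definition of $\Lambda$, should yield $\deg(U) \leq \Lambda^{\omega'u\psi(u)}$. For (d), the starting input is Proposition~\ref{prop:siegel}, giving $h(s_d) \leq (2M'd/(u\epsilon))(t_1+2t_2+\epsilon)\Lambda_h + o(d)$; adding to this the standard height estimates for partial derivatives, for the specialization of the non-$l$ coordinates, and for the change of basis into the adapted coordinates $V^{(l)}$, and finally using \eqref{eq:lambda-h-ineq} to rewrite the resulting expression, one should reach $a_l h(U) \leq \Lambda^{2\psi(u-1)-2\psi(u)}(Mt_2/(4d_l))\Lambda_h$. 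I expect this last verification to be the main obstacle: as noted in the introduction, the factor $8$ in R\'{e}mond's bound $\log 2f_2(u_i, D_i) \leq 8(N+1)D_i\log(N+1)D_i$ must be enlarged to handle the distinct $N_i$ and $d_i$, and the estimate must be kept additive in the $a_i$ so that it fits the per-factor structure of the constants $c^{(i)}_3$. Once the bounds are verified, Proposition~\ref{prop:minimality} is contradicted, which proves the claim.
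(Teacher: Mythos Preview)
Your overall plan—argue by contradiction and produce a pair $(l,U)$ that violates Proposition~\ref{prop:minimality}—is correct, and it is what the paper does. But the mechanism you propose for producing $U$ is not workable, and it is not the paper's mechanism either.

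The decisive tool you are missing is \emph{Faltings' product theorem} (Th\'eor\`eme~5.6 in \cite{MR2233693}). In the paper, one first multiplies $s_d$ by a suitable $\zeta'\in Z'_d$, writes the result as $\alpha$ times a monomial in the $V_0^{(i)}$, and takes the norm of $\alpha$ over the transcendence-degree-$u$ subfield generated by the $V_j^{(i)}/V_0^{(i)}$ to obtain a multihomogeneous polynomial $G$ of multidegree $dt_1(d_1\cdots d_m)a$ in the first adapted coordinates. The assumed lower bound on $\sigma$ translates into $G$ having index at least $\sigma_0 = m\Lambda^{-\omega'\psi(u)}$ at $\rho(x)$ with respect to this multidegree, and the gap hypothesis $a_i/a_{i+1}\geq c_2$ supplies the ratio condition the product theorem needs. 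The product theorem then outputs a polynomial $U$ in a single factor $l$ with $\deg(U)\leq (m/\sigma_0)^u=\Lambda^{\omega'u\psi(u)}$ and with $a_lh(U)$ controlled by $h(G)/(dt_1d_1\cdots d_m)$ plus lower-order terms—both bounds \emph{independent of $d$}.

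Your proposal instead restricts $s_d\otimes\zeta'$ to the $l$-th factor by freezing the other coordinates at $x$. Whatever polynomial comes out of this has degree $dt_1a_l$ in $V^{(l)}$; its height, via Proposition~\ref{prop:siegel}, is of order $d\Lambda_h/\epsilon$. Since the lemma concerns $d$ \emph{sufficiently large}, neither condition~(c) $\deg(U)\leq \Lambda^{\omega'u\psi(u)}$ nor condition~(d) $a_lh(U)\leq \Lambda^{2\psi(u-1)-2\psi(u)}(Mt_2/4d_l)\Lambda_h$ can possibly hold: both right-hand sides are constants in $d$, while your left-hand sides tend to infinity with $d$. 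No combination of the assumed lower bound on $\sigma$, the formula for $\epsilon$, condition~(ii), or the definition of $\Lambda$ can cancel this growth. The whole point of the product theorem is precisely to convert ``large index of a degree-$d$ polynomial'' into ``existence of a polynomial of degree bounded independently of $d$,'' and there is no elementary substitute for it here.

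Two smaller remarks: the correction to the factor $8$ in $\log 2f_2(u_i,D_i)$ that you mention belongs to the extrapolation step in Section~5 (the estimate of $C_{i,v}$), not to the present lemma; and your reference to Proposition~5.3 of \cite{MR2233693} is likewise to the extrapolation, not to the index bound.
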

\begin{proof}
We assume that the inequality is false and derive a contradiction. We can estimate
\begin{align*}
\sigma\prod_{i=1}^{m}{d_i^{-1}} \geq  (4t_1\max_{i}{d_i(N_i+1)})^{-1}\epsilon\prod_{i=1}^{m}{d_i^{-1}} \\
\geq (8u\theta t_1^{u+1}m^u\max_{i}(N_i+1))^{-1}\prod_{i=1}^{m}{d_i^{-\omega'}}.
\end{align*}
It then follows from (iii) that
\[ \sigma\prod_{i=1}^{m}{d_i^{-1}} \geq (8u\theta t_1^{u+1}m^u\max_{i}(N_i+1))^{-1}\prod_{i=1}^{m}{(\deg X_i)^{-\omega'}}\Lambda^{-\omega'(\psi(u)-1)}\]
and hence $\sigma\prod_{i=1}^{m}{d_i^{-1}}  \geq \sigma_0 = m\Lambda^{-\omega'\psi(u)}$.

Then, we can construct a non-zero multihomogeneous polynomial $G(V)$ of multidegree $dt_1(d_1\cdots d_m)a$ in the adapted coordinates $V_j^{(i)}$, $0 \leq j \leq u_i$, of height bounded by
\[ h(G) \leq (d_1\cdots d_m)\left(h(s_d)+dt_1\sum_{i=1}^{m}{a_i(h(B_i)+\log(2(u_i+1)))}\right) + o(d)Ê\]
and of index at least $\sigma$ in $\rho(x)$ with respect to the weight $dt_1a$, where $\rho = (\rho_1 \circ p_1|_Y,\hdots,\rho_m \circ p_m|_Y)$.

For this, we choose $\zeta' \in Z_d'$ which does not vanish at $x$. We write $s_d \otimes \zeta' = \alpha \left(\left(V_0^{(1)}\right)^{\otimes dt_1a_1}\otimes\cdots\otimes \left(V_0^{(m)}\right)^{\otimes dt_1a_m}\right)$, where $\alpha$ is a polynomial in the $W^{(i)}_j/V^{(i)}_{0}$ with coefficients in $\bar{\mathbb{Q}}$. Consider the norm $N(\alpha)$ of $\alpha$ with respect to the field extension $\bar{\mathbb{Q}}(\mathcal{Y})/L$, where $L$ is the subfield of $\bar{\mathbb{Q}}(\mathcal{Y})$ generated by the $V_j^{(i)}/V_0^{(i)}$ ($j=1,\hdots,u_i$, $i=1,\hdots,m$). We can take
\[ G = \left(V^{(1)}_{0}\right)^{dt_1(d_1\cdots d_m)a_1} \hdots \left(V^{(m)}_{0}\right)^{dt_1(d_1\cdots d_m)a_m} N(\alpha).\]

On the one hand, this is a quotient of multihomogeneous elements of $R = \bar{\mathbb{Q}}[V_j^{(i)}; 1Ê\leq i \leq m, 0 \leq j \leq u_i]$. As such it has a multidegree, which is exactly $dt_1(d_1\cdots d_m)a$. On the other hand, it is the norm of the multihomogenization of $\alpha$, which is a multihomogeneous polynomial in the $W_j^{(i)}$, with respect to the field extension
\[ \tilde{L}/\bar{\mathbb{Q}}\left(V_j^{(i)}; 1Ê\leq i \leq m, 0 \leq j \leq u_i\right),\]
where $\tilde{L}$ is the fraction field of the multihomogeneous coordinate ring of $Y_1 \times \cdots \times Y_m \hookrightarrow \mathbb{P}^{N_1} \times \cdotsÊ\times \mathbb{P}^{N_m}$. As such $G$ is integral over $R$ and therefore lies in $R$. So $G$ is in fact a multihomogeneous polynomial of the desired multidegree.

Note that $\beta = N(\alpha)\alpha^{-1} = \prod_{\tau \neq \id}{\tau(\alpha)}$ lies in $\bar{\mathbb{Q}}(\mathcal{Y})$ and is integral over $\mathcal{O}_{\mathbb{P}^{u_1} \timesÊ\cdots \times \mathbb{P}^{u_m},\rho(x)}$ because $\alpha$ is. Here, $\tau$ runs over the embeddings of $\bar{\mathbb{Q}}(\mathcal{Y})$ into a normal closure of the extension $\bar{\mathbb{Q}}(\mathcal{Y})/L$. Hence, $\beta$ is integral over $\mathcal{O}_{\mathcal{Y},x}$. As $\rho$ is \'etale at $x$, this local ring is normal and hence contains $\beta$. So the index of $N(\alpha)$ in $x$ (or equivalently, the index of $G$ in $\rho(x)$) is greater or equal than the index of $\alpha$ in $x$. For the bound for $h(G)$, see Lemme 5.5 of \cite{MR2233693}.

We can then apply Th\'{e}or\`{e}me 5.6 (Faltings' product theorem) of \cite{MR2233693} with the value of $\sigma_0$ above and obtain in this way a contradiction with Proposition \ref{prop:minimality}. The hypotheses of the theorem are satisfied, since
\[Ê\frac{a_i}{a_{i+1}} \geq c_2 \geq \left(\frac{m}{\sigma_0}\right)^u \geq (2u^2)^{u^2}\]
and $G$ has index at least $\sigma$ with respect to the weight $dt_1a$ in $\rho(x)$, hence has index at least $\sigma\prod_{i=1}^{m}{d_i^{-1}}Ê\geq \sigma_0$ with respect to the weight $dt_1(d_1\cdots d_m)a$ in $\rho(x)$.

We obtain a pair $(l,U)$ with $U\left(V^{(l)}\right)(x_l) = 0$, $U$ non-zero, $\deg(U) \leq \left(\frac{m}{\sigma_0}\right)^{u} = \Lambda^{\omega' u \psi(u)}$ and
\begin{align*}
a_lh(U) \leq u_l\left(\frac{m}{\sigma_0}\right)^u\left(\frac{h(G)}{dt_1d_1\cdots d_m}+\sum_{i=1}^{m}{a_i(u_i\log(u_i+1)+\log 2)}\right) \\
+ a_l\left(\frac{m}{\sigma_0}\right)^{u}(u_l+1)\log\left(\frac{m}{\sigma_0}\right)^u(u_l+1)+a_l\log\binom{\deg(U)+u_l}{u_l}+o(1).
\end{align*}
Here, we used that the height of projective $n$-space is bounded from above by $n\log(n+1)$.

After some simplification and by using that $u_l \geq 1$ (which is a consequence of the product theorem) and $\frac{m}{\sigma_0} = \Lambda^{\omega'\psi(u)}$, we deduce that
\begin{align*}
a_lh(U) \leq u_l\Lambda^{\omega'u\psi(u)}\left(\frac{h(s_d)}{dt_1}+2\Lambda_h+2a_l\log2\left(\frac{m}{\sigma_0}\right)^u(u_l+1)\right)+o(1) \\
\leq u_l\Lambda^{\omega'u\psi(u)}\left(\frac{2M'}{u\epsilon}(1+2t_2+3\epsilon)\Lambda_h + 2a_l \log \left(\frac{m}{\sigma_0}\right)^u\right) + o(1).
\end{align*}
For the last inequality, we used that $2a_l\log2(u_l+1)Ê\leq 2\Lambda_h$ and $\frac{2M'}{u}Ê\geq 2$.

We can now estimate
\[Ê2a_lu_l\log\left(\frac{m}{\sigma_0}\right)^u \leq 2\Lambda_h\omega'u\psi(u)\log \Lambda \leqÊ\Lambda_h\Lambda^{(\omega'u-1)\psi(u)},\]
since $a_lu_l \leq \Lambda_h$, $2\omega'u\psi(u)\log \Lambda \leq \Lambda^{\frac{1}{2}\omega'u\psi(u)}$ and $\frac{1}{2}\omega'u \leq \omega'u-1$.
Thanks to (iii), we can bound the first term as
\begin{align*}
\frac{2M'u_l}{u \epsilon} \leq \frac{2M'}{\epsilon} \leq 2M\max_{i}{(N_i+1)}^m(2u\theta)(t_1m)^u\prod_{i=1}^{m}{\deg(X_i)^{1+\omega}}\Lambda^{(1+\omega)(\psi(u)-1)} \\
\leq M\Lambda^{m}\Lambda^{(1+\omega)\psi(u)} \leq M\Lambda^{(\omega'u-1)\psi(u)},
\end{align*}
since $m+(2+\omega)\psi(u) \leq \omega'u\psi(u)$. This last inequality follows from $m \leq u\psi(u)$.

Combining these inequalities with the one above, we obtain that
\[Êa_lh(U) \leq \Lambda^{(2\omega'u-1)\psi(u)}M\Lambda_h(2+2t_2+3\epsilon) + o(1) \leq \Lambda^{2\psi(u-1)-2\psi(u)}\left(\frac{Mt_2}{4d_l}\right)\Lambda_h,\]
where we used that $2+2t_2+3\epsilon \leq 5t_2 \leq \frac{\Lambda^{\psi(u)} t_2}{4 d_l}$ by (ii). We could get rid of the $o(1)$, since for example this last inequality is in fact strict. Thus, we have found a contradiction with Proposition \ref{prop:minimality}.
\end{proof}

\section{Finishing the proof}

We now have established that the section $s_d \in \Gamma(\mathcal{Y},\mathcal{Q}_d)$ given by Proposition \ref{prop:siegel} has index (in $x$ and with respect to the weight $dt_1a$) bounded as
\[Ê\sigma \leq (4t_1\max_{i}{d_i(N_i+1)})^{-1}\epsilon.\]
We write $D$ for a differential operator associated to that index and finish the proof of Theorem \ref{thm:vojta} by considering the following height
\begin{align*}
-h_{\mathcal{Q}_d}(x) = dh(Z(x))-dh(\Xi(x))+d\epsilon\sum_{i=1}^{m}{a_ih(W^{(i)}(x))} \\
= h(Z_d'(x))-dh(\Xi(x)).
\end{align*}
By definition, there exists such a $D$ with $D(s_d)(x) \neq 0$ and we have $D'(s_d)(x) = 0$ for every operator $D'$ of index $\sigma' < \sigma$, hence by the product formula
\[Êh(Z_d'(x)) = h\left((D(s_d)Ê\otimes Z_d')(x)\right) = h \left((D(s_d \otimes \zeta')(x))_{\zeta' \in Z_d'}\right).\]
In order to define the right-hand side, one has to fix an isomorphism $\mathcal{P}^{\otimes d} \simeq \mathcal{O}_{\mathcal{Y}}$. The right-hand side is however independent of the choice of isomorphism, precisely since $D$ is an operator associated to the index of $s_d$.

Let us recall that the sections $s_d \otimes \zeta' \in \Gamma(\mathcal{Y},\mathcal{P}^{\otimes d})$ are homogeneous polynomials of degree $d$ in the sections $\Xi$ and that $\log \dim \Gamma(\mathcal{Y},\mathcal{P}^{\otimes d}) = o(d)$. Furthermore, the sections $\Xi$ themselves are monomials of multidegree $t_1a$ in the coordinates $W^{(i)}$. Hence, the right choice of isomorphism shows that
\[Êh(Z_d'(x)) \leq h(s_d) + h\left((D(\xi^\nu)(x))_{\xi^\nu}\right) + o(d), \]
where $\xi^\nu$ runs over the monomials of degree $d$ in the sections $\Xi$ (seen as monomials of multidegree $dt_1a$ in the $W^{(i)}$) divided by appropriate products of the $V^{(i)}_0$ ($i=1,\hdots,m$).

We can estimate the height of the $D(\xi^\nu)(x)$ by using Leibniz' formula as well as Corollaire 5.1 and Lemme 5.2 of \cite{MR2233693} (corrected). For $1 \leq i \leq m$ and $l = (l_1,\hdots,l_{u_i}) \in (\mathbb{N}Ê\cup \{0\})^{u_i}$, we define the operator
\[Ê\partial^{i,l} = \prod_{j=1}^{u_i}{\frac{1}{l_j!}\left(\partial_{V_j^{(i)}/V_0^{(i)}}\right)^{l_j}}: \mathcal{O}_{\mathcal{Y},x} \to \mathcal{O}_{\mathcal{Y},x}.\]
If $w = (w_1,\hdots,w_k) \in (\mathbb{N}\cup\{0\})^k$ is a multi-index, we write $|w| = w_1 + \cdots + w_k$.

\begin{lem}
Let $1 \leq i \leq m$ be an integer and let $K$ be a number field that contains the coordinates $\left(W_j^{(i)}/V_0^{(i)}\right)(x)$, the families $B_i$ and the products
\[ c_i = \prod_{j}{\left(\frac{\left(W_j^{(i)}/V_0^{(i)}\right)^{b_{i,j}}}{b_{i,j}!}\frac{\partial^{b_{i,j}}Q_j^{(i)}}{\partial T_{u_i+1}^{b_{i,j}}}\left(1,\frac{V_1^{(i)}}{V_0^{(i)}},\hdots,\frac{V_{u_i}^{(i)}}{V_0^{(i)}},\frac{W_j^{(i)}}{V_0^{(i)}}\right)(x_i)\right)^{\frac{1}{b_{i,j}}}},\]
where $j$ runs over the indices satisfying $W_j^{(i)} \neq 0$ in $\Gamma(Y_i,\mathcal{L}_i)$. Then for every place $v$ of $K$ and every multi-index $l \in (\mathbb{N} \cup \{0\})^{u_i}$, we have
\[ \left|\partial^{i,l}\left(W_j^{(i)}/V_0^{(i)}\right)(x)\right|_v \leq \left|\left(W_j^{(i)}/V_0^{(i)}\right)(x)\right|_v(|c_i|_v^{-2}C_{i,v})^{|l|}Ê\]
with
\[ÊC_{i,v} = 2^{-\epsilon_v}\left((d_i(N_i+1))^{6d_i \epsilon_v}\max_{b \in B_i}{|b|_v}\max_{0 \leq k \leq N_i}{\left|\left(W_k^{(i)}/V_0^{(i)}\right)(x)\right|_v^{d_i}}\right)^{2(N_i+1)}\]
and $\epsilon_v = 1$ if $v$ is infinite, $0$ if $v$ is finite.
\end{lem}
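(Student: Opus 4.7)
The plan is to reduce to Corollaire 5.1 and Lemme 5.2 of \cite{MR2233693}, adapted to handle the case $W_j^{(i)}(x_i) = 0$ which is exactly what Corollary \ref{cor:dacorollary}(3) was sharpened to address. I would work throughout in the affine chart $V_0^{(i)} \neq 0$ (well-defined at $x$ by Corollary \ref{cor:dacorollary}(2)), writing $z_k = V_k^{(i)}/V_0^{(i)}$ for $k = 1, \ldots, u_i$ and $y_j = W_j^{(i)}/V_0^{(i)}$, so that $\partial^{i,l}$ is just the divided-power differential operator in the variables $z_k$.

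First, I would dehomogenize the polynomial identity $Q_j^{(i)}(V_0^{(i)}, \ldots, V_{u_i}^{(i)}, W_j^{(i)}) = 0$ to obtain $q_j(z, y_j) = 0$, where $q_j$ has total degree $d_i$, is monic of degree $d_i$ in the last variable, is a $b_{i,j}$-th power of an irreducible polynomial, and has coefficient height controlled by $B_i$. Applying $\partial^{i,l}$ iteratively to this identity and solving for $\partial^{i,l} y_j$ requires inverting the first non-vanishing $y$-derivative of $q_j$ at $(z(x), y_j(x))$, which by construction is $\partial_T^{b_{i,j}} q_j / b_{i,j}!$. This derivative can itself vanish when $y_j(x) = 0$; multiplying through by $y_j(x)^{b_{i,j}}$ yields precisely the $j$-th factor of $c_i^{b_{i,j}}$, which is nonzero by Corollary \ref{cor:dacorollary}(3). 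This explains both the appearance of $|y_j(x)|_v$ on the right-hand side (as the ``initial'' value preserved under differentiation) and the negative powers of $|c_i|_v$ (as the iterated inversion factor).

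Second, I would carry out the inductive step by base $|l|$. The base case $|l| = 0$ is the identity. For the inductive step, applying $\partial^{i,l}$ to $q_j(z, y_j) = 0$ and isolating the top-order term gives
\[ c_{i,j}^{b_{i,j}} \cdot y_j(x)^{-1} \cdot \partial^{i,l} y_j(x) = \Phi\bigl(\{\partial^{i,l'} y_j(x) : |l'| < |l|\}, \{\partial^\alpha q_j(x)\}\bigr), \]
where $\Phi$ is a universal polynomial expression whose coefficients are controlled by multinomial coefficients of size at most $(d_i(N_i+1))^{O(d_i)\epsilon_v}$. Using the inductive hypothesis on the lower-order $\partial^{i,l'} y_j$, and bounding $|\partial^\alpha q_j(x)|_v$ by $\max_{b \in B_i} |b|_v \cdot \max_k |(W_k^{(i)}/V_0^{(i)})(x)|_v^{d_i}$ times a combinatorial factor, one arrives at the bound $|y_j(x)|_v (|c_i|_v^{-2} C_{i,v})^{|l|}$ after choosing the constants $6$ and $2(N_i+1)$ in $C_{i,v}$ with enough slack to swallow the combinatorial and archimedean contributions; the exponent $2$ on $|c_i|_v$ is what absorbs the cross-factors $c_{i,j'}$ for $j' \neq j$ that arise when the recursion mixes different coordinates $y_{j'}$ through evaluation of $q_j$ on monomials in all $W_k^{(i)}/V_0^{(i)}$.

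The main obstacle is the combinatorial bookkeeping in the inductive step: one must verify that a single inversion per differentiation contributes only a bounded (in fact constant) power of $|c_i|_v^{-1}$, and that all auxiliary $y_{j'}(x)$ factors arising from the evaluation of the mixed terms can be absorbed into the $\max_k |(W_k^{(i)}/V_0^{(i)})(x)|_v^{d_i}$ factor raised to the power $2(N_i+1)$. This is exactly the bookkeeping carried out (in the special case $W_j^{(i)}(x_i) \neq 0$) in Lemme 5.2 of \cite{MR2233693}; keeping the factor $|y_j(x)|_v$ uncancelled on the right-hand side throughout the induction is the only modification needed to make the argument valid also when $W_j^{(i)}(x_i) = 0$.
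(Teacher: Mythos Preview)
Your overall strategy—reduce to Corollaire~5.1 and Lemme~5.2 of \cite{MR2233693} and verify the combinatorics—is exactly the paper's approach. But there is a conceptual muddle in your treatment of the case $W_j^{(i)}(x_i)=0$.

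You write that when $y_j(x)=0$ one should ``multiply through by $y_j(x)^{b_{i,j}}$'' to get the $j$-th factor of $c_i$, which is nonzero by Corollary~\ref{cor:dacorollary}(3). But that factor is literally $\frac{y_j(x)^{b_{i,j}}}{b_{i,j}!}\,\partial_T^{b_{i,j}}Q_j^{(i)}(\cdots)(x_i)$; if $y_j(x)=0$ this is zero, not nonzero. The point of Corollary~\ref{cor:dacorollary}(3) is precisely the contrapositive: since the product is nonzero, each factor is nonzero, and in particular $W_j^{(i)}(x_i)\neq 0$ whenever $W_j^{(i)}\neq 0$ in $\Gamma(Y_i,\mathcal{L}_i)$. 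So the case you are trying to handle simply does not occur, and no modification of the induction in Lemme~5.2 is needed. (Note also that the lemma statement itself would be ill-posed if this case arose: $c_i$ would vanish and $|c_i|_v^{-2}$ would be undefined, while the right-hand side would be $0\cdot\infty$.) The paper's proof accordingly just splits off the trivial case $W_j^{(i)}=0$ in $\Gamma(Y_i,\mathcal{L}_i)$ (both sides zero) and otherwise invokes Corollaire~5.1 and Lemme~5.2 of \cite{MR2233693} directly.

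What you are missing, and what constitutes the actual content of the paper's proof, is the verification that the combinatorial constant $f_2(u_i,d_i)$ arising in Lemme~5.2 fits inside the specific shape of $C_{i,v}$ (in particular the exponent $6d_i$). The paper carries out the explicit chain of inequalities showing
\[
2f_2(u_i,d_i)=2(2u_i+4)^{1+\frac{3}{2}d_i}d_i\binom{d_i+u_i}{u_i}^{2(N_i+2)}\binom{d_i+1}{2}^2(2(N_i+1)d_i)^{2(N_i+1)d_i}\leq (d_i(N_i+1))^{12d_i(N_i+1)},
\]
which is what your phrase ``choosing the constants $6$ and $2(N_i+1)$ with enough slack'' is gesturing at but not doing.
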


\begin{proof}
Recall that by Corollary \ref{cor:dacorollary}, the number $c_i \in \bar{\mathbb{Q}}\backslash\{0\}$ is well defined (up to the choice of the roots which can be made arbitrarily).

If $W_j^{(i)} = 0$ in $\Gamma(Y_i,\mathcal{L}_i)$, the derivative $\partial^{i,l}\left(W_j^{(i)}/V_0^{(i)}\right)$ is zero and the inequality holds. Otherwise, we may apply Corollaire 5.1 of \cite{MR2233693} and follow the proof of Lemme 5.2 of \cite{MR2233693} with $N=N_i$, using at the end that
\[Ê2f_2(u_i,d_i) = 2(2u_i+4)^{1+\frac{3}{2}d_i}d_i\binom{d_i+u_i}{u_i}^{2(N_i+2)}\binom{d_i+1}{2}^2(2(N_i+1)d_i)^{2(N_i+1)d_i}\]
is bounded from above by
\begin{align*}
2^{2+\frac{3}{2}d_i+2(N_i+1)d_i}(N_i+2)^{1+\frac{3}{2}d_i}(N_i+1)^{2(N_i+2)d_i}d_i^5((N_i+1)d_i)^{2(N_i+1)d_i} \\
\leq (N_i+1)^{N_i+1+\frac{3}{2}d_i+\frac{\log 3}{\log 2}(1+\frac{3}{2}d_i)+5(N_i+1)d_i}d_i^5((N_i+1)d_i)^{2(N_i+1)d_i} \\
\leq (N_i+1)^{2(N_i+1)+2(N_i+1)d_i+5(N_i+1)d_i}d_i^5((N_i+1)d_i)^{2(N_i+1)d_i}  \\
\leq (d_i(N_i+1))^{12d_i(N_i+1)}.
 \end{align*}
\end{proof}
For $D = \prod_{i=1}^{m}{\partial^{i,\kappa_i}}$, we obtain the following bound (cf. the proof of Proposition 5.3 in \cite{MR2233693})
\[Ê|D(\xi^{\nu})(x)|_v \leq |\xi^{\nu}(x)|_v \prod_{i=1}^{m}{2^{(|\kappa_i|+dt_1u_ia_i)\epsilon_v}(|c_i|_v^{-2}C_{i,v})^{|\kappa_i|}}\]
and hence thanks to the product formula for the $c_i$
\begin{align*}
h\left((D(\xi^\nu)(x))_{\xi^\nu}\right) \leq dh(\Xi(x)) + \sum_{i=1}^{m}{2(N_i+1)|\kappa_i|(h(B_i) + d_ih(W^{(i)}(x)))} \\
+ \sum_{i=1}^{m}{(dt_1u_ia_i\log 2 + 12d_i(N_i+1)|\kappa_i|\log d_i(N_i+1))}.
\end{align*}

We proceed with bounding
\[Ê|\kappa_i| \leq dt_1a_i\sigma \leq (4(N_i+1)d_i)^{-1}d\epsilon a_i,\]
which implies that $\sum_{i=1}^{m}2d_i(N_i+1)|\kappa_i|h(W^{(i)}(x)) \leq \frac{d\epsilon}{2}h_{\mathcal{N}_a}(x)$. Together with \eqref{eq:height-bi}, the bound also implies that
\begin{align*}
\sum_{i=1}^{m}{2(N_i+1)|\kappa_i|(h(B_i)+6d_i\log d_i(N_i+1))} \\
\leq d\epsilon\sum_{i=1}^{m}{a_i \left(\frac{h_i+d_i(u_i+1)\log d_i(N_i+1)}{2d_i} + 3\log d_i(N_i+1)\right)}Ê\leq 4d\epsilon \Lambda_h.
\end{align*}

Finally we know that $\sum_{i=1}^{m}{u_ia_i\log2} \leqÊ\Lambda_h$ and putting all these estimates together we get
\[Ê\epsilon h_{\mathcal{N}_a}(x) - h_{\mathcal{M}}(x) = -\frac{h_{\mathcal{Q}_d}(x)}{d} \leq \frac{h(s_d)}{d} + \frac{\epsilon}{2}h_{\mathcal{N}_a}(x) + (t_1+4\epsilon)\Lambda_h + o(1).\]
Thanks to Proposition \ref{prop:siegel} and \eqref{eq:lambda-h-ineq}, it follows that
\begin{align*}
\frac{\epsilon}{2} h_{\mathcal{N}_a}(x) - h_{\mathcal{M}}(x) \leq \frac{2M'}{u\epsilon}(t_1+2t_2+\epsilon)\Lambda_h+(t_1+4\epsilon)\Lambda_h+o(1)Ê\\
\leq \left(\frac{2M'}{u\epsilon}\right)(2t_1+2t_2+5\epsilon)\Lambda_h +o(1)\\
\leq \left(\frac{M't_1}{\epsilon^2}\right)8(2+2t_2+5\epsilon)\Lambda^{2\psi(u)-2\psi(0)}(Mt_2)^{-u}\left(\frac{\epsilon}{4}\sum_{i=1}^{m}{a_ic^{(i)}_3} \right),
\end{align*}
where the strict inequality in \eqref{eq:lambda-h-ineq} allowed us to sweep the $o(1)$ under the rug (for $d$ large enough).

We have $8(2+2t_2+5\epsilon) \leq 42t_2 \leq \Lambda^2 t_2$ and it follows from (iii) that
\begin{align*}
(M't_1)\epsilon^{-2} \leq Mt_1\max_{i}{(N_i+1)}^{m}(2u\theta)^2(t_1m)^{2u}\left(\prod_{i=1}^{m}{\deg(X_i)}\right)^{2(1+\omega)}\Lambda^{2(1+\omega)(\psi(u)-1)}\\
\leq M\Lambda^{\max\{3,m\}+2(1+\omega)\psi(u)} \leq M\Lambda^{2\omega'u\psi(u)-2} = M\Lambda^{2\psi(u-1)-2\psi(u)-2},
\end{align*}
where we used that $\max\{3,m\}Ê\leq 4u\psi(u)-2$.

Hence, we can deduce that
\[\frac{\epsilon}{2} h_{\mathcal{N}_a}(x) - h_{\mathcal{M}}(x) \leqÊ(Mt_2)^{-(u-1)}\Lambda^{2\psi(u-1)-2\psi(0)}\frac{\epsilon}{4}\sum_{i=1}^{m}{a_ic^{(i)}_3}  \leq \frac{\epsilon}{4}h_{\mathcal{N}_a}(x),\]
from which it follows that $h_{\mathcal{N}_a}(x) \leq 4\epsilon^{-1}h_{\mathcal{M}}(x)$. The theorem follows, since by (iii)
\[4\epsilon^{-1} \leq 8u\theta(t_1m)^u\left(\prod_{i=1}^{m}{\deg(X_i)}\right)^{1+\omega}\Lambda^{(1+\omega)(\psi(u)-1)} \leq \Lambda^{(1+\omega)\psi(u)+2}\]
and $\Lambda^{(1+\omega)\psi(u)+2} \leq \Lambda^{\omega'u\psi(u)} \leq c_1$.

\section*{Acknowledgements}
I thank Thomas Ange for sharing his unpublished work. I thank my advisor Philipp Habegger for his continuous encouragement and for many helpful and interesting discussions. I thank Philipp Habegger and Ga\"{e}l R\'{e}mond for helpful comments on a preliminary version of this article. I thank the anonymous referee for their comments, which helped me to improve the exposition and led to a strengthening of the main result. This work was supported by the Swiss National Science Foundation as part of the project ``Diophantine Problems, o-Minimality, and Heights", no. 200021\_165525.

\bibliographystyle{acm}
\bibliography{Bibliography}

\end{document}